\definecolor{mycolor_peach}{RGB}{251,111,66}
\definecolor{mycolor_lightblue}{RGB}{8,180,238}
\definecolor{mycolor_darkblue}{RGB}{1,17,181}
\definecolor{mycolor_teal}{RGB}{18,150,155}
\definecolor{mycolor_green}{RGB}{12,195,82}
\definecolor{mycolor_lightgreen}{RGB}{8,180,238}
\newcommand{\bm}[1]{{\boldsymbol{#1}}}
\theoremstyle{definition}
\newtheorem{example}{Example}
\theoremstyle{plain}
\newtheorem{theorem}{Theorem}
\newtheorem{proposition}{Proposition}
\theoremstyle{remark}
\newtheorem{remark}{Remark}
\begin{document}

\title{Gradient and Passive Circuit Structure in a Class of Non-linear Dynamics on a Graph}

\author{Herbert~Mangesius, 
      Jean-Charles Delvenne, Sanjoy K. Mitter
\thanks{H. Mangesius is with the Department of Electrical and Computer Engineering, Technische Universit\"{a}t M\"{u}nchen, Arcisstrasse 21, D-80209 Munich, Germany,
J.-C. Delvenne is with the ICTEAM and CORE, Universit\'{e} catholique de Louvain, 4 Avenue Lema\^{i}tre, B-1348 Louvain-la-Neuve, Belgium, and S.K. Mitter is with the Laboratory for Information and Decision Systems, Department of Electrical Engineering and Computer Science, MIT, Cambridge, MA 02139, U.S.A.  }
}


\maketitle

\begin{abstract}
We consider a class of non-linear dynamics on a graph that contains and generalizes various models from network systems and control and study convergence to uniform agreement states using gradient methods. 
In particular, under the assumption of detailed balance, we provide a method to formulate the governing ODE system in gradient descent form of sum-separable energy functions, which thus represent a class of Lyapunov functions; this class coincides with Csisz\'{a}r's information divergences.
Our approach bases on a transformation of the original problem to a mass-preserving transport problem and it reflects a little-noticed general structure result for passive network synthesis obtained by B.D.O. Anderson and P.J. Moylan in 1975.
The proposed gradient formulation extends known gradient results in dynamical systems obtained recently by M. Erbar and J. Maas in the context of porous medium equations.
Furthermore, we exhibit a novel relationship between inhomogeneous Markov chains and passive non-linear circuits through gradient systems, and show that passivity of resistor elements is equivalent to strict convexity of sum-separable stored energy.
Eventually, we discuss our results at the intersection of Markov chains and network systems under sinusoidal coupling.
\end{abstract}


\IEEEpeerreviewmaketitle

\section{Motivation}
 Gradient methods provide an elegant way to physics motivated modeling \cite{vdSchaftJeltsema2014} \cite{Otto2001} and are closely linked to passivity theory and the circuit concept \cite{vdSchaft2000} \cite{Willems1972a}. They are a basic tool in studying and designing non-linear systems on a graph, e.g., in distributed optimization \cite{Nedic2015} or in multi-robot problems such as coverage or formation control, cf., e.g., \cite{Egerstedt2010}, \cite{BulloCortesMartinez2009}, and references therein.

Another pillar in network system studies is the classical consensus problem \cite{Murray2007}. An equivalence between the dynamics (trajectories) of Markov chains and consensus networks has been source of recent advances in consensus theory \cite{Bolouki2014}. 
For LTI symmetric consensus networks such an equivalence has been linked to the averaging dynamics of unit capacitor RC circuits in \cite{Egerstedt2010} chap. 3.
Within the mathematics community, a static relationship is usually considered between Markov chains and electric circuits (resistor networks) \cite{Doyle1984}. The static (algebraic) circuit equations due to Kirchhoff and Ohm in fact are known to serve as generic structure underlying various scientific and computational problems, see, e.g., \cite{Strang2010} chap. 2.

Gradient formulations of Markov chains using sum-separable energy functions have been of recent interest in dynamical and non-linear systems \cite{ErbarMaas2014}\cite{Mielke2011}\cite{Maas2011}\cite{Zhou2012}. Interestingly, sum-separability of energy is an axiom in interconnected dissipative systems \cite{Willems1972a} and has origins in the circuit concept.

In this paper we bring these various concepts together in novel ways, based on a gradient structure for a class of non-linear dynamics on a graph that covers a wide range of prominent network system problems.

\section{Problem description and related literature \label{sec:pdes}}
Let $\mathsf{G}=(N,B,w)$ be a weighted directed graph, where
$N=\{1,2,\ldots,n\}$ is the set of nodes, $B=\{1,2,\ldots,b\}\subseteq N\times N$ denotes the set of branches whose elements are ordered pairs $(j,i)$ denoting an edge from node $j$  to $i$, and $w:B \to \mathbb{R}_{>0}$ is a weighting function, such that $w((j,i))=:w_{ij}$, if $(j,i) \in B$, else $w_{ij}=0$. 
Associated to a graph is the Laplace matrix
$\textbf{L}$,
defined component-wise as $[\textbf{L}]_{ij}=-w_{ij}$, $[\textbf{L}]_{ii}=\sum_j w_{ij}$. 
For strongly connected graphs, denote the positive left-eigenvector associated to the unique zero eigenvalue of the Laplacian by $\bm{c}$, and define $\textbf{C}:=\mathsf{diag}\{c_1,c_2,\ldots,c_n\}$. 

An important generalization of the symmetry condition on Laplacians that $\textbf{L}=\textbf{L}^\top$ is the particular type-symmetry that for some $\textbf{C}$, and $i,j\in N$,
\begin{equation}\label{eq:irredevp}
 c_i w_{ij}=c_jw_{ji} \Leftrightarrow \textbf{C}\textbf{L}=\textbf{L}^\top\textbf{C}.
\end{equation}
Equation \eqref{eq:irredevp} is known in the literature on Markov chains as detailed balance, or as reversibility  w.r.t. $\bm{c}$, cf., \cite{Norris1997} chap. 2.

\vspace*{0.3cm}

We consider the general class of dynamics on a graph $\mathsf{G}$ \mbox{described} component-wise by an ODE of the type
\begin{equation}\label{eq:nilinet}
\dot{x}_i=\sum_{j:(j,i)\in B}w_{ij}\,\phi(x_j,x_i), \ \  \ i \in N,
\end{equation}
where $\phi(\cdot,\cdot)$ is Lipschitz continuous,
$\phi(a,b)$ negative if $a<b$, zero iff $a=b$, positive if $a>b$, and $|\phi(a,b)|$ is increasing if $|a-b|$ is increasing. 

The class \eqref{eq:nilinet} includes many known network models:
The usual linear consensus system \cite{Murray2007} is obtained from setting
$\phi(x_j,x_i)=x_j-x_i$.
If $\phi(x_j,x_i)=f(x_j-x_i)(x_j-x_i)$, with $f(z)=f(-z)>0$,
then, the ODE \eqref{eq:nilinet} describes a continuous-time opinion dynamics \cite{Tilli2012}.
For instance, one may choose $f(z)=|\tanh(p\cdot z)|$, $p>0$, which is a good choice for \mbox{modeling} saturation phenomena in the interaction.
If $\phi(x_j,x_i)=\psi(x_j-x_i)$, $\psi(z)=-\psi(z)$, then we recover the non-linear consensus class introduced by Olfati-Saber and Murray in \cite{MurrayACC2003}, with $\psi=\sin$ a prominent instance.
Beyond the presented known \mbox{interaction} types, our model also includes couplings of the form $\phi(x_j,x_i)=g(x_j)-g(x_i)$,  where $g$ is an increasing function\footnote{or $\phi(x_j,x_i)=l(x_i)-l(x_j)$, where $l$ is a decreasing function}, e.g., $\ln (x),e^x, x^p$, $p>0$, on the respective domain of definition. The latter interaction type covers a discrete version of an equation system that models the non-linear diffusion of a gas in porous media, see \cite{ErbarMaas2014} (and \cite{Vazquez2007} for the continuous context).

\vspace*{0.3cm}

From an operational point of view, we are interested in bringing the ODE system \eqref{eq:nilinet}, under the assumption of detailed \mbox{balance}, into the gradient form
\begin{equation}\label{eq:gr}
\dot{\bm{q}}=-\textbf{K}(\bm{q})\nabla E(\bm{q}),
\end{equation}
where $\bm{q}$ is a suitable transform of the original state $\bm{x}$, $\textbf{K}(\cdot)$ is a symmetric, positive semi-definite matrix function that inherits the sparsity structure of the graph $\mathsf{G}$, and $E(\bm{q})$
is a sum-separable Lyapunov function.
    
 This structure defines gradient descent systems living on subspaces of $\mathbb{R}^n$, where
$-\nabla E(\cdot) \cdot\textbf{K}(\cdot)\nabla E(\cdot)$, describing locally the dissipation rate of $E$, is negative definite.
 In the context of \mbox{gradient} systems this structure is quite particular, as we impose the sparsity constraint given by $\mathsf{G}$,
 require sum-separability of the potential $E$, and do not require positive definiteness of the inverse metric $\textbf{K}$; these constraints are not usual from a \mbox{classical} gradient system point of view, cf., e.g., \cite{SimpsonPorco2014}, but turn out to be elementary in a passive circuits context.
\vspace*{0.3cm}

For particular cases of graph weightings and functions $\phi$, \mbox{gradient} structures for the class \eqref{eq:nilinet} have been established: For symmetric consensus systems, (i.e. $\textbf{L}=\textbf{L}^\top$, $\phi(x_j,x_i)=x_j-x_i$), it is well known that the network dynamics are a gradient descent of the (non-sum-separable) interaction potential $\frac{1}{2}\bm{x}^\top\textbf{L}\bm{x}$, see, e.g., \cite{Murray2007}. In \cite{vdSchaft2011} a port-Hamiltonian view as gradient descent of the sum-of-squares energy $ \frac{1}{2}\sum_{i\in N}x_i^2$ is presented. \mbox{Under} the less restrictive assumption of detailed balance weightings, the linear system dynamics (understood as Markov chain) has been formulated as gradient descent of free energy, resp. of relative entropy, in the works \cite{Zhou2012} and \cite{Maas2011}.
In \cite{ErbarMaas2014}, for systems with detailed balance weighting, and $\phi(x_j,x_i)=g(x_j)-g(x_i)$, $g$ increasing, a smooth gradient descent structure is presented for sum-separable energies $\sum_{i\in N}c_iH(x_i)$, $H$ being strictly convex and smooth on $\mathbb{R}_{>0}$.  
For the particular non-separable interaction case of having sinusoidal coupling, but symmetric weighting, gradient flow structures are represented, e.g., in \cite{Doerfler2014} or \cite{Sarlette2013}, where energy functions however are non-separable.

In the following we solve the general gradient representation problem and motivate the proposed structure requiring sum-separable energy functions from a passivity and circuit systems viewpoint.

%
%

\section{Gradient representation \label{sec:repres}}
With the following result we provide a procedure to bring a dynamics \eqref{eq:nilinet} into the form \eqref{eq:gr}. By that we characterize a \mbox{family} of sum-separable Lyapunov functions characterizing asymptotic stability of agreement states, i.e., states where all components are equal.

\begin{theorem}\label{thm:1}
Consider a network system dynamics governed by the protocol \eqref{eq:nilinet} on a strongly connected graph $\mathsf{G}$ such that detailed balance \eqref{eq:irredevp} holds for some $\textbf{C}$.
 Define the new state $\bm{q}:=\textbf{C}\bm{x}$, and consider the sum-separable function 
\begin{equation}\label{eq:thmliap}
E(\bm{q}):=\sum_{i\in N} c_iH(c_i^{-1}q_i), 
\end{equation}
where $H:\mathbb{R}\to\mathbb{R}$ is any $\mathscr{C}^2$-function, and
set $h(z):=\frac{\mathrm{d} H(z)}{ \mathrm{d} z}$.
If $H$ is strictly convex, then
the system can be represented as
\begin{equation}\label{thm:Kode}
\dot{\bm{q}}=-\textbf{K}(\bm{q}) \nabla E(\bm{q}),
\end{equation}
where $\textbf{K}(\cdot)$ is defined as the irreducible and symmetric Laplace matrix having components
\begin{equation}
[\textbf{K}]_{ij}:=\left\{\begin{tabular}{ll}
$-c_iw_{ij}\frac{\phi(x_j,x_i)}{h(x_j)-h(x_i)}$ & if  $j\not = i$,\\
$-\sum_{k=1, k\not = i}^n [\textbf{K}]_{ik}$ & if $i=j$.
\end{tabular} \right.
\end{equation}
The function \eqref{eq:thmliap} is a Lyapunov function establishing \mbox{asymptotic} stability of the \mbox{equilibrium} point $x_{\infty}\bm{1}$, with equilibrium value the weighted arithmetic mean  
 $x_{\infty}=\frac{\sum_{i\in N} c_ix_i(0)}{\sum_{i\in N}c_i}$.
 \end{theorem}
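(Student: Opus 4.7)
The plan is to verify the gradient identity by direct substitution, then invoke symmetry and connectedness of $\textbf{K}$ to run a Lyapunov/LaSalle argument. First, I would compute the gradient of $E$: since $q_i = c_i x_i$, the chain rule gives $\partial E/\partial q_i = c_i\cdot h(c_i^{-1}q_i)\cdot c_i^{-1} = h(x_i)$, so $\nabla E(\bm q) = (h(x_1),\dots,h(x_n))^\top$. Using the row-sum-zero convention $K_{ii}=-\sum_{j\neq i}K_{ij}$, the $i$-th component of $-\textbf{K}\nabla E$ telescopes to $-\sum_{j\neq i}K_{ij}(h(x_j)-h(x_i))$. Substituting the formula for $K_{ij}$ cancels the factor $(h(x_j)-h(x_i))$ and leaves exactly $c_i\sum_{j:(j,i)\in B} w_{ij}\,\phi(x_j,x_i) = c_i\dot x_i = \dot q_i$, establishing \eqref{thm:Kode}.

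Next I would verify that $\textbf{K}$ has the claimed structure. Symmetry of the off-diagonal entries follows from detailed balance $c_iw_{ij}=c_jw_{ji}$ together with the antisymmetry $\phi(x_j,x_i)=-\phi(x_i,x_j)$ inherited from the sign conventions on $\phi$ (so that $\phi(x_j,x_i)/(h(x_j)-h(x_i)) = \phi(x_i,x_j)/(h(x_i)-h(x_j))$). Strict convexity of $H$ makes $h$ strictly increasing, so $h(x_j)-h(x_i)$ and $\phi(x_j,x_i)$ share the same sign, giving $K_{ij}\ge 0$ for $j\neq i$; at coincident states $x_i=x_j$ the quotient is defined by continuity using smoothness of $h$ and Lipschitz continuity of $\phi$. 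Hence $\textbf{K}(\bm q)$ is a symmetric, diagonally dominant matrix with nonnegative off-diagonal entries and zero row sums, i.e. a symmetric graph Laplacian, and thus positive semi-definite; it inherits the connectivity of $\mathsf{G}$ away from agreement states, so $\ker\textbf{K}=\mathrm{span}\{\bm 1\}$.

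Finally I would close the argument via Lyapunov and conservation. Computing $\dot E = \nabla E(\bm q)^\top \dot{\bm q} = -\nabla E^\top \textbf{K}\nabla E \le 0$, with equality iff $\nabla E\in \ker\textbf{K}=\mathrm{span}\{\bm 1\}$; strict monotonicity of $h$ then forces $x_i=x_j$ for all $i,j$, so the only invariant set inside $\{\dot E=0\}$ consists of agreement states. The total mass $\bm 1^\top\bm q(t)=\sum_i c_i x_i(t)$ is conserved since $\bm 1^\top\dot{\bm q}=-\bm 1^\top\textbf{K}\nabla E=0$, which pins the asymptotic common value to $x_\infty=\sum_{i}c_ix_i(0)/\sum_i c_i$. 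LaSalle's invariance principle then yields asymptotic stability of $x_\infty\bm 1$. The main obstacles I expect are purely technical: making sure the quotient defining $K_{ij}$ is genuinely continuous (and differentiable enough to invoke LaSalle) at states where $x_i=x_j$, and checking that the antisymmetry of $\phi$ across the relevant model classes is strong enough to yield $K_{ij}=K_{ji}$ under detailed balance.
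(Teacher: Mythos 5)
Your proposal is correct and follows essentially the same route as the paper: chain rule giving $\partial E/\partial q_i=h(x_i)$, expansion by $h(x_j)-h(x_i)$ to expose the Laplacian $\textbf{K}$, symmetry from detailed balance plus the sign structure of $\phi$, well-definedness of the quotient as $x_j\to x_i$, and LaSalle together with conservation of $\bm{1}^\top\bm{q}$ to pin down $x_\infty$. The only blemish is a sign slip: with the paper's convention the off-diagonal entries $[\textbf{K}]_{ij}=-c_iw_{ij}\,\phi(x_j,x_i)/(h(x_j)-h(x_i))$ are non-positive (it is the quotient itself that is positive), which is what makes the zero-row-sum matrix $\textbf{K}$ positive semi-definite; and the antisymmetry of $\phi$ you flag as a possible obstacle is assumed in the paper's own proof as well, via the representation $\phi(a,b)=\mathsf{sgn}(a-b)\,d(a,b)$ with $d$ a symmetric distance.
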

 \begin{proof}
 First, we observe that by the chain rule with $c_i^{-1}q_i=x_i$,
 \begin{equation}
 \frac{\partial }{\partial q_i}E(\bm{q})=c_i \frac{\partial H(x_i)}{\partial x_i}\frac{\partial x_i}{\partial q_i}=c_ih(x_i)c_i^{-1}=h(x_i).
 \end{equation}


The network dynamics \eqref{eq:nilinet} can be written equivalently as
\begin{equation}
\frac{1}{c_i}c_i\dot{x}_i=\sum_{j:(j,i)\in B} w_{ij}\phi(x_j,x_i)\Leftrightarrow
\dot{q}_i=\sum_{j:(j,i)\in B} c_iw_{ij}\phi(x_j,x_i).
\end{equation}
 Expanding by $h(x_j)-h(x_i)$ yields
\begin{align}
\dot{q}_i&=\sum_{j:(j,i)\in B} c_iw_{ij}\frac{\phi(x_j,x_i)}{h(x_j)-h(x_i)}\left(h(x_j)-h(x_i)\right) \\
&=\sum_{j:(j,i)\in B} [\textbf{K}]_{ij}\left(\frac{\partial }{\partial q_j}E(\bm{q})-\frac{\partial }{\partial q_i}E(\bm{q})\right) \\
 \Leftrightarrow \dot{\bm{q}}&=-\textbf{K}(\bm{q})\nabla E(\bm{q}),
\end{align}
where we use the identity $\sum_{j:(j,i)\in B}[\textbf{K}]_{ij}=-[\textbf{K}]_{ii}$.

Next, we show that the matrix $\textbf{K}$ is a symmetric, irreducible Laplace matrix. 
As $H$ is strictly convex and of type $\mathscr{C}^2$, $h$, as derivative of $H$, is an increasing function, 
 i.e., for any two real numbers $a,b$, $h(a)< h(b)$, whenever $a < b$.
 We observe that  $\phi(a,b)= \mathsf{sgn}(a-b)d(a,b)$, where $d$ is a Lipschitz continuous
  distance in $\mathbb{R}$.
 Now, $\frac{\phi(x_j,x_i)}{h(x_j)-h(x_i)}$ is symmetric in both arguments and positive for non-identical arguments, as
 \begin{align}
 \frac{\mathsf{sgn}(x_j-x_i)\cdot d(x_j,x_i)}{h(x_j)-h(x_i)}=& \frac{\mathsf{sgn}(x_j-x_i)\cdot d(x_j,x_i)}{\mathsf{sgn}(x_j-x_i) |h(x_j)-h(x_i)|} \\
  =& \frac{d(x_i,x_j)}{|h(x_i)-h(x_j)|}>0, \label{eq:symmratio}
 \end{align}
 where we used the fact that a distance is symmetric in both arguments and positive.
 By hypothesis \eqref{eq:irredevp}, for all $i\not = j$, $c_iw_{ij}=c_jw_{ji}$, so that
 \begin{equation}
 -[\textbf{K}]_{ij}=c_iw_{ij}\frac{\phi(x_j,x_i)}{h(x_j)-h(x_i)}=
 c_jw_{ji} \frac{\phi(x_i,x_j)}{h(x_i)-h(x_j)}=-[\textbf{K}]_{ji},
 \end{equation}
 and hence, $\textbf{K}$ is symmetric. It is also irreducible, as the underlying graph $\mathsf{G}$ is assumed to be strongly connected.
  
  Further, 
  for
 $x_j\to x_i$ the components $-[\textbf{K}]_{ij}$
are well-defined in the sense that their value remains positive and finite:
  For two real numbers $b,a$, $b>a$,
\begin{align}
\lim_{b\to a} \frac{\phi(b,a)}{h(b)-h(a)}\stackrel{\eqref{eq:symmratio}}{=}
&\lim_{b\to a} \frac{d(b,a)}{|h(b)-h(a)|} \\
= &\lim_{b\to a} \frac{d(b,a)}{|b-a|}\frac{|b-a|}{|h(b)-h(a)|}>0, \label{eq:pos}
\end{align}  
as the distance is Lipschitz continuous and increasing the further one moves away from $a$, and $h$ is an increasing function, so that the second fraction is positive as well. 
Hence, $[\textbf{K}]_{ij}$-elements are finite and positively bounded away from zero
for all parametrizations.

Convergence to an agreement state $x_{\infty}\bm{1}$ that is asymptotically stable follows from LaSalle's invariance principle:
\mbox{Stationarity} implies $\bm{0}=\textbf{K}(\bm{q})\nabla E(\bm{q})$, which is the case if and only if $\nabla E(\bm{q})\in \mathsf{ker}(\textbf{K})=\mathsf{span}\{\bm{1}\}$, as $\textbf{K}$ is a symmetric, irreducible Laplace matrix. 
Then, as $\textbf{K}$ is positive semi-definite, $E$ will decrease its value along solutions, as
\begin{equation}
\dot{E}(\bm{q})=-\nabla E(\bm{q})\cdot \textbf{K}(\bm{q})\nabla E(\bm{q})=-||\nabla E(\bm{q})||^2_{\textbf{K}},
\end{equation}  
  until a minimum is reached when $\nabla E(\bm{q})=\bm{h}(\bm{x}) \in \mathsf{span}\{\bm{1}\}$. 
 The function $h$ is an increasing function, hence bijective, so that the stationarity condition at a point $\bar{\bm{x}}$ is equivalent to $\bar{\bm{x}}\in \mathsf{span}\{\bm{1}\}$. 
Given an initial condition from a set defined by $\sum_{i} q_i=const.$, the set \mbox{$\mathcal{I}:=\{\bm{q}\in \mathbb{R}^n, \sum_{i} q_i=const.: \dot{E}(\bm{q})=0\}$} is a singleton, as $\bm{1}^\top \dot{\bm{q}}=\bm{1}^\top\textbf{K}(\bm{q})\nabla E(\bm{q})=0$, and \mbox{$\mathsf{span}\{\bm{1}\} \cap \mathcal{I}$} is a point $x_{\infty}\bm{1}$.
This shows that $E$ is a Lyapunov function for the considered network dynamics establishing asymptotic stability of an agreement state $x_{\infty}\bm{1}$.
  The  agreement value results from
  $\sum_{i} q_i(t)=const.$, so that $\sum_{i} c_ix_i(0)=x_{\infty}\sum_i c_i$ and therefore  $x_{\infty}= \frac{\sum_{i} c_ix_i(0)}{\sum_i c_i}$ the weighted arithmetic mean. 
 \end{proof}

In the following we provide a passive circuit interpretation of this gradient result, from where an intuitive meaning of the $\bm{q}$-variable system follows. We then present for our gradient construction the relationship to a structure result in passive network synthesis and show an equivalence between strict convexity of $E$ and passivity of resistor elements.

 \section{Synthetic circuit structure and passivity\label{sec:hma}}

 \subsection{Circuit formulation}
In circuit theory the dynamical behavior of a system is seen as the result of the interaction of a finite number of interconnected  circuit elements.
Circuit elements are single-input-single-output systems among which lossless, dynamical ones that can store energy and possess memory, e.g. capacitors in electric circuits, and memoryless, non-dynamic ones that dissipate energy, e.g. resistors, play important roles. Sum-separability of stored energy in this context thus arises naturally, as it is the sum of energies stored in individual lossless circuit elements.

We consider the lossless, dynamical multi-input-multi-output system
\begin{equation}\label{eq:MIMOsigmaN}
\left\{\begin{tabular}{ll}
$\dot{\bm{q}}(t)=\bm{u}_N(t)$\\
$\bm{y}_N(t)=\bm{h}(\bm{x}(t))= \nabla E(\bm{q})$,
\end{tabular}\right.
\end{equation}
which is an input-output model for elastic systems  \cite{Willems1972a}. In the circuits context it describes a generalized capacitor bank, where the 
 vector $\bm{q}$ collects charges in $n$ capacitors, each having a capacitance $c_i$, so that $\bm{u}_N$ is a vector of input currents, $\bm{y}_N$ a vector of generalized output voltages, and $E$ is the energy stored in the generalized capacitors.

\begin{example}[Passive LTI capacitor and electric energy]\label{ex:capacitor}
Electric energy stored in $n$ LTI passive capacitors
is given by
\begin{equation}
E(\bm{q})=\frac{1}{2}\sum_{i=1}^nc_i  v_{C,i}^2=
\frac{1}{2}\sum_{i=1}^n c_i  (c_i^{-1}q_i)^2,
\end{equation}
 which is of the particular form proposed in Theorem \ref{thm:1}.
 The \mbox{gradient} of electric energy w.r.t. charge as state is $\nabla E(\bm{q})=\bm{v}_C$, the capacitor voltage vector serving as output. Capacitors are current controlled, i.e., $\bm{u}_N=\bm{i}_C$, $i_{C,i}$ the capacitor $i$'s input current, so that we obtain the well-known input-output \mbox{characteristic} $s \textbf{C}\bm{v}_C=\bm{i}_C$, with $s$ a Laplace variable, and the state space representation $\dot{\bm{q}}=\bm{i}_C$, $\bm{v}_C=\nabla E(\bm{q})$.
\end{example}

In a resistor network where $b$ resistor elements (representing the branches in a graph) are interconnected at $n$ nodes, the \mbox{governing} equations follow Kirchhoff's voltage / current law (KVL / KCL) and Ohm's law \cite{Strang2010} chap. 2: Let $\textbf{B}$ be the usual $n\times b$ branch to node incidence matrix, and $\textbf{D}_B$ the $b\times b$ diagonal matrix collecting the positive values of $b$ conductances (inverse of resistances). The vector $\bm{v}_B$ collects the $b$ resistor voltages,  $\bm{i}_B$  the corresponding $b$ resistor currents,  $\bm{i}_N$ is the vector of node currents and $\bm{v}_N$ the vector of node voltages. We then have
\begin{equation}
\text{KVL}: \ \ \bm{v}_B:=-\textbf{B}^\top\bm{v}_N, \ \ \ \text{Ohm}: \ \ \bm{i}_B:=\textbf{D}_B\bm{v}_B, \ \ \ \text{KCL}: \ \ \bm{i}_N=\textbf{B}\bm{i}_B.
\end{equation}

Setting $\textbf{D}_B=\mathrm{diag}\{r_1^{-1},\ldots,r_b^{-1} \}$, with positive resistances at an edge $e=(j,i)$ given by
\begin{equation}\label{eq:resistance}
r_e:=\frac{1}{c_iw_{ij}}\frac{h(x_i)-h(x_j)}{\phi(x_i,x_j)},
\end{equation}
the gradient system proposed in Theorem \eqref{thm:1} represents an $RC$-circuit with dynamics described by voltage variables at the network nodes as
\begin{equation}
\textbf{C}\dot{\bm{v}}_N = \dot{\bm{q}} =  -\textbf{K}\nabla E(\bm{q})=-\textbf{B} \textbf{D}_B\textbf{B}^\top\bm{v}_N,
\end{equation}
where we use the fact that a symmetric, irreducible Laplacian can be factorized as $\textbf{K}=\textbf{B} \textbf{D}_B\textbf{B}^\top$.
This synthetic structure of a non-linear network dynamics \eqref{eq:nilinet} as $RC$-circuit is depicted in Fig. \ref{fig:synthstruct}.
 
\begin{figure}[t]
\centering
\begin{tikzpicture}[scale=1, circuit ee IEC]
\draw[color=black, rounded corners, fill=gray!30,line width=1pt] (-2.05,0) rectangle (4.15,3);
\draw[color=black,thick, fill=blue!35] (-0.6,1.7) rectangle (2.6,2.8);
 \draw[color=black,thick,fill=red!35] (-1.25,0.2) rectangle (3.4,1.3);
\draw[color=black,thick,fill=white] (-0.25,2) rectangle (0.25,2.6);
\node at (0,2.3) {$\frac{1}{s}$};
\draw[color=black,thick,fill=white]   (1.3,2) rectangle (2.3,2.6);
\node at (1.8,2.3) {$\nabla E(\cdot)$};
\draw[fill]  (-1.8,2.3) circle (0.07);

\node at (-1.25,2.55) {$\bm{i}_N$};
\node at (-0.45,2.55) {$\dot{\bm{q}}$};
\node at (0.9,2.55) {$\bm{q}$};
\node at (3.3,2.55) {$\bm{v}_N$};
\node at (2.15,1.05) {$\bm{v}_B$};
\node at (0,1.05) {$\bm{i}_B$};

\draw[color=black,thick,fill=white] (2.5,+1.1) rectangle (3.1,0.5) ;
\node at (2.8,0.8) {$\textbf{B}^\top$};

\draw[color=black,thick,fill=white] (-0.4,1.1) rectangle (-1.05,0.5) ;
\node at (-0.7,0.85) {$\textbf{B}$};

\draw[color=black,thick,fill=white] (0.3,1.1) rectangle (1.75,0.5) ;
\node at (1.05,0.8) {$\textbf{D}_B(\bm{q};\bm{c})$};

\draw[-latex] (-1.8,2.3) -- (-0.25,2.3) ;
\draw[-latex] (0.25,2.3) -- (1.3,2.3) ;
\draw[-latex] (2.3,2.3) -- (3.9,2.3)-- (3.9,0.8)-- (3.1,0.8) ;
\draw[-latex] (2.5,0.8) -- (1.75,0.8) ;
\draw[-latex] (0.3,0.8) -- (-0.4,0.8) ;
\draw[-latex](-1.05,0.8)--(-1.8,0.8)--(-1.8,2.25) ;

\node at (-1.6,2.05) {\LARGE-};
\end{tikzpicture}
\caption{Output feedback representation of the gradient dynamics as circuit:  Resistor network in Kirchhoff-Ohm factorized form (red), capacitor bank (blue)}
\label{fig:synthstruct}
\end{figure}
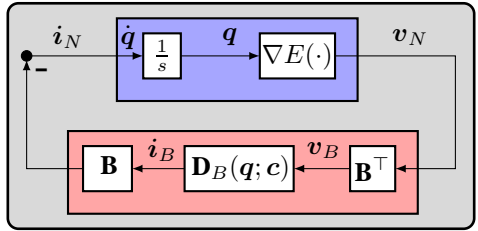


\begin{example}[LTI RC circuit and consensus dynamics]\label{ex:RC}
Consider a resistor network where at each node with one terminal an LTI capacitor of capacitance $c_k$ is connected, see Ex. \ref{ex:capacitor}.
Kirchhoff's and Ohm's laws lead to a local balance equation for currents flowing in and out at each node $k\in N$:
\begin{align}
i_{C,k}&=\sum_{e=(j,k)\in B}i_e= \sum_{e\in B} \frac{1}{r_{e}}v_e
=\sum_{j:(j,k)\in B} \frac{1}{r_{e}}(v_{C,j}-v_{C,k}) \\
\Leftrightarrow
 \dot{v}_k&=\sum_{j:(j,k)\in B} \frac{1}{c_k r_{kj}}(v_{C,j}-v_{C,k}),
\end{align}
where we used the capacitor equation $\dot{v}_{C,k}=c^{-1}_k i_{C,k}$.
Define the Laplacian $\textbf{L}$, with $[\textbf{L}]_{kj}=-\frac{1}{c_kr_{kj}}$,
and $[\textbf{L}]_{kk}=\sum_{j\not= k}\frac{1}{c_kr_{kj}}$. 
Then, the averaging $RC$-electric circuit dynamics of terminal voltages are a detailed-balance LTI consensus dynamics 
$\dot{\bm{v}}_C=-\textbf{L}\bm{v}_C$.
\end{example}

\begin{remark}
We note that the weights $w_{kj}=\frac{1}{c_kr_{kj}}$ in Ex. \ref{ex:RC} have a (physical) meaning in RC circuits: They represent inverse time-constants, or cut-off frequencies. 
\end{remark}

\subsection{Hill-Moylan-Anderson result, passivity and convexity}

D.C. Youla and P. Tissi showed in \cite{Youla1966} that a synthesized \mbox{dynamics} that solves the classical LTI network synthesis problem\footnote{Classical network synthesis is concerned with reproducing a prescribed LTI input-output behavior
in terms of a finite number of elementary passive, linear, ideal (lumped) circuit elements and a scheme for interconnecting them \cite{Anderson1975} \cite{Anderson2006}.} is structured as (negative) feedback system, in which
a dissipative and memoryless network controls 
a lossless and passive system that is comprised of decoupled unit capacitors and inductors\footnote{
The procedure to obtain this feedback representation is commonly referred to as reactance extraction, as the reactive, i.e., dynamical network elements are extracted from the composite system.}.

Anderson and P.J. Moylan in \cite{Anderson1975} and  Hill with Moylan in \cite{Hill1980}
proposed a structure result for non-linear ODE systems, which characterizes a realization of a vector field as lossless-memoryless decomposition similar to the well-known Youla-Tissi one for linear networks. For an equation system $\dot{\bm{x}}=\bm{f}(\bm{x})$ that generates solutions along which $E:\mathbb{R}^n\to\mathbb{R}$ is a differentiable Lyapunov function,
their non-linear analogue to extract a lossless and dynamic part leads to an output feedback structure as illustrated in Fig. \ref{fig:hmadec},
where $(\nabla E)^{-1}: \mathbb{R}^n\to\mathbb{R}^n$ denotes any (generally non-linear) function such that $[(\nabla E)^{-1} \circ \nabla E ](\bm{x})=\bm{x}$.

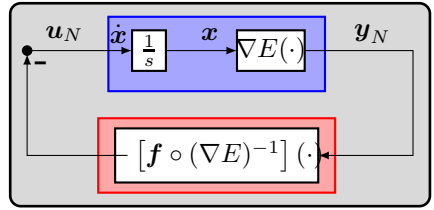
\begin{figure}[t]
\centering
\begin{tikzpicture}[scale=0.9, circuit ee IEC]
\draw[color=black, rounded corners, fill=gray!30,line width=1pt] (-2.05,0) rectangle (4.15,3);
\draw[color=blue,thick, fill=blue!35] (-0.6,1.7) rectangle (2.6,2.8);

 \draw[color=red,thick, fill=red!35] (-0.75,0.2) rectangle (2.75,1.3);
\draw[color=black,thick,fill=white] (-0.25,2) rectangle (0.25,2.6);
\node at (0,2.3) {$\frac{1}{s}$};
\draw[color=black,thick,fill=white]   (1.3,2) rectangle (2.3,2.6);
\node at (1.8,2.3) {$\nabla E(\cdot)$};
\draw[fill]  (-1.8,2.3) circle (0.07);

\node at (-1.25,2.55) {$\bm{u}_N$};
\node at (-0.45,2.55) {$\dot{\bm{x}}$};
\node at (0.9,2.55) {$\bm{x}$};
\node at (3.3,2.55) {$\bm{y}_N$};

 \draw[color=black,thick,fill=white] (2.5,1.15) rectangle (-0.5,0.35)  ;
\node at (1,0.75) {$-\left[ \bm{f}\circ(\nabla E)^{-1} \right] (\cdot)$};

 \draw[-latex] (-1.8,2.3)--(-0.25,2.3);
\draw[-latex] (2.3,2.3) -- (3.9,2.3)-- (3.9,0.75)--(2.5,0.75);
 \draw[-latex] (0.25,2.3) -- (1.3,2.3) ;
  \draw[-latex](-0.5,0.75)--(-1.8,0.75)--(-1.8,2.25) ;

 \node at (-1.6,2.05) {\LARGE-};

 \end{tikzpicture}
\caption{Hill-Moylan-Anderson decomposition for an ODE system $\dot{\bm{x}}=\bm{f}(\bm{x})$. }
\label{fig:hmadec} 
\end{figure}

The preceding circuit formulation of the gradient result in Theorem \ref{thm:1} yields an explicit implementation of this general structure result for the non-linear ODE system class \eqref{eq:nilinet}, i.e., for
$[\bm{f}(\bm{x})]_i=\sum_{j:(j,i)\in B}w_{ij}\,\phi(x_j,x_i),  i\in N$, where
 the inversion mechanism is realized by interconnected passive \mbox{resistor} \mbox{elements}:
The Laplacian structure resulting from Kirchhoff's laws allows to simply multiply pairwise interactions in the non-linear vector field with the reciprocal of energy gradient \mbox{differences}, to obtain the conductances 
\begin{equation}\label{eq:conductance}
r^{-1}_{ij}=c_iw_{ij}\frac{\phi(x_j,x_i)}{\frac{\partial}{\partial x_j} E(\bm{x})-\frac{\partial}{\partial x_i} E(\bm{x})},\ \ \ (j,i)\in B,
\end{equation}
with $w_{ij}$, $c_i$, $\phi$, $E$ as specified in section \ref{sec:pdes} and Theorem \ref{thm:1}.
In this reciprocal inversion mechanism, strict passivity of a resistor element is equivalent to positivity of resistance (resp. of conductance) and hence to dissipation of energy across a resistor edge connecting two non-identical potentials. In the context of the system class \eqref{eq:nilinet}, we observe an interplay between strict passivity of resistor elements and strict convexity of sum-separable energy:

\begin{proposition}\label{prop:passiveresistors}
Each resistor element characterized by a conductance \eqref{eq:conductance} is strictly passive if and only if the sum-separable energy $E$ as given in Theorem \ref{thm:1} is strictly convex.
\end{proposition}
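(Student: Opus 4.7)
The plan is to show that passivity of each resistor is equivalent to positivity of the conductance \eqref{eq:conductance}, which in turn hinges on the monotonicity of $h=H'$, and finally translate this back into strict convexity of the sum-separable $E$. Since everything has been reduced to a sign analysis, no heavy machinery is needed; the key observation is that the sum-separability of $E$ decouples the Hessian into scalar contributions, one per node.

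First, I would recall that a (memoryless) resistor element with conductance $r_{ij}^{-1}$ is strictly passive precisely when the instantaneous dissipated power $v_e i_e = r_{ij}^{-1} v_e^2$ is strictly positive whenever the voltage $v_e=h(x_j)-h(x_i)$ across the element is non-zero. Equivalently, strict passivity amounts to $r_{ij}^{-1}>0$ for every pair $(j,i)\in B$ with $x_j\neq x_i$. Using the sign structure of $\phi$ established in the proof of Theorem \ref{thm:1}, namely $\phi(x_j,x_i)=\mathsf{sgn}(x_j-x_i)\,d(x_j,x_i)$ with $d>0$ for $x_j\neq x_i$, the ratio $\phi(x_j,x_i)/(h(x_j)-h(x_i))$ appearing in \eqref{eq:conductance} is strictly positive for every $x_j\neq x_i$ if and only if $h(x_j)-h(x_i)$ carries the sign of $x_j-x_i$, i.e., if and only if $h$ is strictly increasing on the operating domain.

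Next, I would translate strict monotonicity of $h$ into strict convexity of $H$. Since $H\in\mathscr{C}^2$ and $h=H'$, strict monotonicity of $h$ is equivalent to $H$ being strictly convex. To lift this to $E$, I compute the Hessian $\nabla^2 E(\bm{q})$ of $E(\bm{q})=\sum_{i\in N}c_i H(c_i^{-1}q_i)$: by sum-separability it is diagonal, with entries
\begin{equation}
\bigl[\nabla^2 E(\bm{q})\bigr]_{ii}=c_i^{-1}H''(c_i^{-1}q_i),\qquad i\in N.
\end{equation}
Because each $c_i>0$, positive definiteness of $\nabla^2 E$, i.e., strict convexity of $E$ on the product domain, is equivalent to $H''>0$ everywhere, i.e., to strict convexity of $H$.

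Combining the two chains of equivalences gives the proposition: strict passivity of every resistor $\Leftrightarrow$ $r_{ij}^{-1}>0$ for all $(j,i)\in B$ and $x_j\neq x_i$ $\Leftrightarrow$ $h$ strictly increasing $\Leftrightarrow$ $H$ strictly convex $\Leftrightarrow$ $E$ strictly convex. The only step that requires any care is verifying that the sign of $h(x_j)-h(x_i)$ being forced by the sign of $x_j-x_i$ is equivalent to \emph{strict} monotonicity of $h$ (and hence to \emph{strict} convexity rather than mere convexity of $H$); this is handled by invoking the $\mathscr{C}^2$ assumption on $H$ and the fact that $\phi(x_j,x_i)\neq 0$ whenever $x_j\neq x_i$, which forces the denominator $h(x_j)-h(x_i)$ to be non-zero there as well. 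I do not anticipate any serious obstacle.
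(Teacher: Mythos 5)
Your proposal is correct and follows essentially the same route as the paper: reduce strict passivity of each resistor to positivity of the conductance \eqref{eq:conductance}, use the sign decomposition $\phi(x_j,x_i)=\mathsf{sgn}(x_j-x_i)\,d(x_j,x_i)$ to see that this positivity is equivalent to $h$ being strictly increasing, and identify that with strict convexity of $H$ and hence of the sum-separable $E$. One small blemish: strict convexity of a $\mathscr{C}^2$ function $H$ is \emph{not} equivalent to $H''>0$ everywhere (consider $H(z)=z^4$ at $z=0$), so the Hessian detour should be replaced by the direct equivalence between strict convexity of $H$ and strict monotonicity of $h=H'$ --- which is the link both you and the paper actually rely on, so the overall argument stands.
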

\begin{proof}
Strict convexity of $E$ is equivalent to increasingness of the gradient component functions $h$.
If $h$ is increasing, then each $c_iw_{ij}\frac{\phi(x_j,x_i)}{h(x_j)-h(x_i)}=c_iw_{ij}\frac{d(x_j,x_i)}{|h(x_j)-h(x_i)|}>0$, see \eqref{eq:symmratio} with the preceding arguments. 
If $h$ were not (monotonously) increasing, but also decreasing at some part of the state space, then, in that region $\frac{h(x_j)-h(x_i)}{\phi(x_j,x_i)}<0$, i.e., the non-linear resistance would not be passive, but active. 
Conversely, if $h$ is constant on some interval on the real line, then
resistance would vanish, and so does dissipation and the dynamics, hence the resistance is not strictly passive (neither active).
\end{proof}

For sum-separable, strictly convex energy to be a Lyapunov function,  passivity of each (internal) resistor element is not \mbox{necessary} but
only sufficient. It is required that $\textbf{K}(\bm{q})$ is positive semi-definite,
so that 
\begin{equation}
\frac{\mathrm{d}}{\mathrm{d}t}E(\bm{q})=- \nabla E(\bm{q})\cdot \textbf{K}(\bm{q})\nabla E(\bm{q})=-||\nabla E(\bm{q})||_{\textbf{K}}^2<0 
\end{equation}
except at the equilibrium point. This dissipation inequality can be satisfied when $E$ is not strictly convex, i.e.,  individual non-linear resistors may locally (in some region on state space) be active (have negative resistance), as long as globally more energy is dissipated than created. 

\begin{remark}
In Hill, Moylan, Anderson's work it is assumed that the function $(\nabla E)^{-1}$ 
exists. Moylan, in \cite{Moylan2014} chap. 10, conjectures that convexity of $E$ is a sufficient condition for the \mbox{existence} of the function inverse. Strict convexity is in fact sufficient as a classical result from convex analysis shows: The function inverse $(\nabla E)^{-1}=\nabla E^\star$, where $E^\star$ is dual to $E$ in the sense of Young, see, e.g., \cite{Arnold1989}, chap. 3. 
\end{remark}

\subsection{Conservation of charge and Markov dynamics}
A pillar of the circuit concept is the conservation of total charge, see, e.g., \cite{Valkenburg1974} chap. 1. In the following we relate the non-linear charge dynamics associated to the class \eqref{eq:nilinet} to the dynamics of a (spatially) inhomogeneous Markov chain.

 With $\textbf{K}(\cdot)$ being a symmetric, irreducible Laplace matrix for all parametrizations, 
 $\mathsf{ker}(\textbf{K}(\cdot))=\mathsf{span}\{\bm{1}\}$, so that
 $\sum_{i\in N}q_i(t)=const.$ for all times $t\geq 0$, since $\bm{1}^\top\dot{\bm{q}}=0$. This recovers the conservation principle for charge in our circuit interpretation of Theorem \ref{thm:1}.
 Without loss of generality, we can choose $\bm{q}(0) \in \mathbb{R}^n_{> 0}$ such that $\sum_i q_i(0)=1$. The normalized $\bm{q}$-vector then also has the interpretation of a probability mass distribution on a discrete probability space: Each node $i\in N$ is a possible state, and $q_i(t)$ is the probability of a random walker on a graph $\mathsf{G}$ of being in state $i$ at time $t$, see, e.g., \cite{Lovasz1993}.
 
  The equation system describing the probability transport associated to the non-linear averaging dynamics \eqref{eq:nilinet} in $\bm{x}$-variables follows from the gradient formulation in Theorem \ref{thm:1} with the admissible choice  $H(x)=\frac{1}{2} x^2$, so that $\nabla E(\bm{q})=\bm{x}$:
  
\noindent   Define $\textbf{F}^\top(\cdot):=\textbf{K}(\cdot)\textbf{C}^{-1}$ and observe that $\textbf{F}=\textbf{C}^{-1}\textbf{K}$ is an irreducible (non-symmetric) Laplace matrix satisfying detailed balance. Then,
  \begin{equation}\label{eq:probdyn}
  \dot{\bm{q}}=-\textbf{K}(\bm{q}) \bm{x}=-\textbf{K}(\bm{q})\textbf{C}^{-1} \bm{q}=-\textbf{F}^\top(\bm{q})\bm{q}.
  \end{equation}
  where $\textbf{F}^\top$ is the infinitesimal generator of a (spatially inhomogeneous) Markov chain.
This Markov chain \mbox{asymptotically} reaches the invariant probability measure given by the \mbox{normalized} capacitances $\bm{c}$ with $\sum_{i\in N}c_i=1$.
  
 This equivalence between passive $RC$-circuits and Markov chains as dynamical systems bears the following novelties:
 
 i) The usual relation between electric circuits and Markov chains in the applied mathematics literature restricts to a static equivalence between a resistor network and the probability transition kernel of the Markov chain, cf., the seminal work \cite{Doyle1984} and references therein. In the engineering literature a dynamical relationship is known only for LTI symmetric consensus systems, which are equivalent to homogeneous, symmetric Markov chains and unit-capacitance RC-circuits, see, e.g., \cite{Egerstedt2010} chap. 3. We extend those results to dynamical, non-linear $RC$-circuits, where we show the relationship between detailed balance and non-unit capacitances.

ii)  With capacitances $\bm{c}$ normalized such that $\sum_i c_i=1$, stored energy $E(\bm{q})=\sum_i c_iH(c_i^{-1}q_i)$, $H$ strictly convex, corresponds to the class of information-divergences of a probability distribution $\bm{q}$ to the equilibrium distribution $\bm{c}$, introduced by Ali and Silvey \cite{AliSilvey1966} and Csisz\'{a}r, cf., \cite{Csiszar2004}. 
The usual technique to prove decreasingness of $E$ is based on Jensen's inequality \cite{Liese2006}, see also \cite{Willems1976}. Theorem \ref{thm:1} establishes this dissipation inequality for the class of Csisz\'{a}r's information divergences in a novel way, namely by exhibiting a passive RC circuit structure.


\begin{remark}
Other physical interpretations of the $\bm{q}$-variable (charge, probability) and $\bm{x}$-variable (voltage, density) system representations can be found in mass action chemical reaction networks, see, e.g., \cite{vdSchaft2013} and \cite{Mielke2011}.
\end{remark}

\section{Discussion \label{sec:disc}}
\subsection{Coupled oscillator models and electric power grids}
A generic model in the study of phase-coupled oscillator networks is given by the ODE system
on a graph $\mathsf{G}$,
\begin{equation}\label{eq:oscimodel}
\dot{\theta}_i=\omega_i+\sum_{j:(j,i)\in B}w_{ij}\sin (\theta_j-\theta_i), \ \ \ i\in N,
\end{equation}
where $\bm{\omega}=(\omega_1,\omega_2,\ldots,\omega_n) \in \mathbb{R}^n$ is the vector of
natural \mbox{(driving)} frequencies, and the state $\bm{\theta}\in \mathbb{T}^n$ is an $n$-vector of angles
as elements of the $n$-Torus.

If we set $w_{ij}=\frac{K}{n}$, $K>0$ for all $(j,i)\in B$, then \eqref{eq:oscimodel} represents 
Kuramoto's oscillator model \cite{Spigler2005}.
If $w_{ij}=\frac{|v_i||v_j| \Im(y_{ij})}{D_i}$, $|v_i|$ a voltage magnitude, $y_{ij}$ the complex admittance of a line $(j,i)$, and $D_i>0$ a damping parameter, then \eqref{eq:oscimodel} describes
a so-called droop control setup for frequency stabilization of generators in an electric power grid whose diffusively coupled voltage angles $\theta_i$ are driven by nominal power inputs $\omega_i$, see, e.g., \cite{Doerfler2014}.

Observe that the detailed balance condition  \eqref{eq:irredevp} naturally \mbox{applies} in this setting: In Ex. \ref{ex:RC} we saw that $c_iw_{ij}=c_jw_{ji}$ implies that weights $w_{ij}$ have the form
 $w_{ij}=\frac{1}{r_{ij} c_i}$, $r_{ij}=r_{ji}$. \mbox{Following} this RC circuits view, we may take $r_{ij}^{-1}=\Im(y_{ij})$ \mbox{having} unit siemens (one over ohm), and $c_i^{-1}=|v_i||v_j|/D_i$. Capacitance has unit farad $F=\frac{\text{ampere} \cdot \text{sec}}{\text{volt}}$, so that $D_i$ should carry the unit volt-ampere-seconds (an energy), which is \mbox{similar} to a measure of a power deviation per base frequency. This indeed matches the meaning of the factor $D_i$ in the droop control setting.

Using our reactance extraction approach we can write \eqref{eq:oscimodel} as (driven) gradient system, with $\bm{q}=\textbf{C}\bm{\theta}$, $\bm{\omega}_C:=\textbf{C}\bm{\omega}$, and the gradient $\bm{h}(\bm{\theta})=\nabla E(\bm{q})$, 
\begin{equation}\label{eq:oscidescent}
\dot{\bm{q}}=\bm{\omega}_C -\textbf{K}(\bm{\theta})\nabla E(\bm{q}),
\ \ \ [\textbf{K}]_{ij}=c_jw_{ij}\frac{\sin(\theta_j-\theta_i)}{h(\theta_j)-h(\theta_i)},
\end{equation}

For instance, if we choose $E=\sum_{i}c_i \theta_i^2=\sum_i\frac{1}{c_i}q_i^2$, (electric energy),
then, $\frac{\sin(\theta_j-\theta_i)}{\theta_j-\theta_i}=\mathsf{sinc}(\theta_j-\theta_i)$,
the sine cardinalis, or sampling function. We denote the corresponding inverse metric by $\textbf{K}_\mathsf{sinc}$.
Let $\bm{\omega}_C=c\bm{1}$, $c\in \mathbb{R}$, so that without loss of generality we can study the dynamics in a rotating frame at speed $c$ and set $\bm{\omega}=\bm{0}$ in \eqref{eq:oscidescent} \cite{Doerfler2014} \cite{Moreau2005} . 
Phase synchronization takes place if $\max_{i,j\in N} |\theta_j-\theta_i|<\pi$, 
because in that case $\mathsf{sinc}(\cdot)>0$, i.e., all non-linear resistances $[\textbf{K}_\mathsf{sinc}]^{-1}_{ij}$ are passive, 
so that $\lim_{t\to\infty} \bm{\theta}(t) \to \theta_{\infty}\bm{1}$, according to Theorem \ref{thm:1}. By that we recover a known phase synchronization result, see, e.g., \cite{Doerfler2014}, but via a passive circuits approach.


\vspace*{0.2cm}
As shown in section \ref{sec:repres}, we can bring the coupled oscillator model \eqref{eq:oscimodel} (in uniform rotating frame) into the form of a Markov chain dynamics.
The fact that the class of information divergences are Lyapunov functions, allows to explore mixing time bounds in Markov chains for convergence bounds in dynamics on a graph, which often are tighter than usual bounds based on the second largest eigenvalue of a Laplace matrix $\textbf{K}$, see \cite{Tetali2005} for an overview.

\subsection{Discrete De Bruijn's identity \label{ssec:debruijn}}
Two elementary quantities in information theory are differential entropy, a measure of the descriptive complexity of a random variable, and Fisher information, a measure of the \mbox{minimum} error in estimating a parameter from a distribution. Let $S \subseteq \mathbb{R}$ be the support set of a random variable $X$ of finite variance,
and let $x(\xi)>0$, $\forall \xi\in S$, be a probability density distribution\footnote{i.e., the gradient of the cumulative probability distribution function on $S$} 
for $X$. The differential entropy then is
defined as \cite{Cover1991} chap. 9,
\begin{equation}\label{eq:diffent}
\mathrm{Ent}(X):=-\int_S x(\xi)\ln x(\xi)\mathrm{d}\xi.
\end{equation}
The Fisher information of the distribution of $X$ can be defined as in \cite{Cover1991} chap. 16.6,
\begin{equation}\label{eq:fi}
\mathcal{J}(X):=\int_Sx(\xi)\left(\frac{\nabla x(\xi)}{x(\xi)}\right)^2\mathrm{d}\xi=
\int_S|\nabla \ln x(\xi)|^2x(\xi)\mathrm{d}\xi.
\end{equation}
This is a special form of the Fisher information, taken with respect to a translation parameter on the continuous support $S$, which does not involve
an explicit parameter as in its most \mbox{general} definition \cite{Cover1991} chap. 12, see also \cite{Verdu2006}.

De Bruijn's identity provides a relationship between these two quantities: For a
process $Y=X+\sqrt{t}Z$, where $Z$ is a normally distributed random variable, 
\begin{equation}\label{eq:debruijncontin}
\frac{\partial}{\partial t}\mathrm{Ent}(Y)=\frac{1}{2}\mathcal{J}(Y),
\end{equation}
see \cite{Cover1991} Theorem 16.6.2.

In the context of porous medium equations, Erbar and Maas in \cite{ErbarMaas2014} propose the discrete version of Fisher information
\begin{equation}
J(\bm{x}):=\frac{1}{2}\sum_{i,j\in N} c_iw_{ij}\phi(x_j,x_i)\left(h(x_j)-h(x_i)\right) ,
\end{equation}
where $\phi(x_j,x_i)=g(x_j)-g(x_i)$, $g$ an increasing function.
Note that the gradient of a function on a discrete space $(N,B,w)$ is given by the (weighted) difference of the function values at connected nodes.

For instance, if we choose  $g(x_j)-g(x_i)=x_j-x_i$, and relative entropy $E(\bm{q})=\sum_ic_iq_i\ln \frac{q_i}{c_i}$ as energy, so that $h=\ln$, we get
\begin{equation}\label{eq:discretedebruijn}
\frac{\mathrm{d}}{\mathrm{d}t} E(\bm{q})=-J(\bm{x})=\frac{1}{2}\sum_{i,j\in N} c_iw_{ij}\mathsf{lgm}(x_j,x_i)\left\vert \ln x_j-\ln x_i \right\vert^2,
\end{equation}
where  $\mathsf{lgm}(x_j,x_i):=\frac{x_j-x_i}{\ln x_j-\ln x_i}$ is the logarithmic mean of two positive variables, and $|\ln x_j-\ln x_i|^2$ is the discrete equivalent to $|\nabla \ln x|^2$ in \eqref{eq:fi}.

In the definition of Fisher information this gradient is integrated w.r.t.  $x \mathrm{d}\xi$.
On a discrete space gradients live on the set of edges $B$, while the density vector $\bm{x}$ is defined for elements indexed in the set $N$.
The logarithmic mean accounts for this lack of support in the discrete case: By the mean value theorem,
there exists a value $x_{ij} \in [x_i,x_j]$, (where we suppose that density components satisfy $x_i<x_j$), such that
\begin{equation}
\nabla \ln x_{ij} = \frac{\ln x_j-\ln x_i}{x_j-x_i} \ \ \Leftrightarrow \ \ x_{ij}=\mathsf{lgm}(x_j,x_i).
\end{equation}
Further,  $\mathsf{lgm}^{-1}(x_i,x_j)\equiv \int_0^1\frac{\mathrm{d}\xi}{x_i \xi+(1-\xi) x_j}$ \cite{Carlson1972}, so that an ``edge density'' $x_{ij}$ 
 can be seen as a (convex) interpolation of the density across edges $e=(j,i)\in B$ based on knowledge of density components $x_i,x_j$ defined on nodes $i,j\in N$.

Let us consider the discrete De Bruijn inequality for a system \ref{eq:nilinet} with coupling  $\phi(x_j,x_i=\sin(x_j-x_i)$. Then,
\begin{equation}
J(\bm{x})=\frac{1}{2}\sum_{i,j\in N} c_iw_{ij}\frac{\sin (x_j-x_i)}{\ln x_j-\ln x_i}\left\vert \ln x_j-\ln x_i \right\vert^2.
\end{equation}
Using a discrete chain rule we can write the corresponding edge density
$x_{ij}=\frac{\sin(x_j-x_i)}{x_j-x_i}\frac{x_j-x_i}{\ln x_j-\ln x_i}\triangleq \mathsf{sinc}(x_j-x_i)\mathsf{lgm}(x_j,x_i)$. This is the logarithmic mean modulated by a sampling function kernel that takes values between zero and one (on sets where each edge corresponds to a strictly passive resistor). 

\vspace*{0.2cm}
As far as we known, this connection between De Bruijn's identity in information theory and the dissipation equality \eqref{eq:discretedebruijn} as discrete version of it is novel.  
A discrete version on domains characterized by graphs is natural in applications, where high-dimensional data actually resides on nodes of graphs, and (non-linear) weightings may characterize an application's peculiarity in terms of the irregularity of the domain.
It would be interesting to further understand the role of discrete instead of discretized continuous information inequalities, as the presented one of De Bruijn, within the emerging field of signal processing on graphs \cite{Shuman2013}.

\section{Conclusion}
In this paper we established for a general class of non-linear dynamics on a graph with detailed balance weighting a family of gradient structures associated to sum-separable strictly convex energy functions. 
This structure extends known gradient results in dynamical systems to pairwise couplings involving non-separable non-linearity.
Based on our gradient formalism we made several connections between previously separated results in dynamical network systems, Markov chains, information and circuit theory, where at the heart lies a little-noticed structure result for non-linear network synthesis, due to B.D.O. Anderson, P.J. Moylan and D. Hill.

\section*{Acknowledgment}
The second author was supported by the Interuniversity Attraction Pole ``Dynamical Systems, Control and Optimization (DYSCO)'', initiated
by the Belgian State, Prime Minister's Office, and the Action de Recherche Concert\'ee funded by the Federation Wallonia-Brussels.
The third author was supported by the National Science Foundation under the grant EECS-1135843, ``CPS: Medium: Collaborative Research:'Smart Power Systems of the Future: Foundations for Understanding Volatility and Improving Operational \mbox{Reliability}'''

\bibliographystyle{ieeetr}
{\small
\bibliography{IntEnergyVarConsBib}
}

%



\end{document}